\newtheorem{theorem}{Theorem}[section]
\newtheorem{lemma}[theorem]{Lemma}
\newtheorem{proposition}[theorem]{Proposition}
\newtheorem{corollary}[theorem]{Corollary}
\theoremstyle{definition}
\theoremstyle{remark}
\numberwithin{equation}{section}
\begin{document}\large

\title{Harmonic 2-forms and positively curved 4-manifolds }

%    Information for first author
\author{Kefeng Liu}
%    Address of record for the research reported here
\address{Mathematical Sciences Research Center, Chongqing University of Science and Technology, Chongqing
400054, China}

\address{Department of Mathematics, University of California at Los Angeles, California
90095, USA}

%    Current address
\curraddr{} \email{liu@math.ucla.edu}
%    \thanks will become a 1st page footnote.
\thanks{}

%    Information for second author
\author{Jianming Wan}

\address{School of Mathematics, Northwest University, Xi'an 710127, China}

\email{wanj\_m@aliyun.com}

\thanks{}

%    General info
\subjclass[2010]{Primary 53C20; Secondary 53C25}

\date{}

\dedicatory{}

\keywords{Positive curvature, harmonic forms, Hopf conjecture}

\begin{abstract}\normalsize
We prove that if a compact Riemannian 4-manifold with positive sectional curvature satisfies a Kato type inequality, then it is definite. We also discuss some new insights for compact Riemannian 4-manifolds of positive sectional curvature.
\end{abstract}

\maketitle

\section{Introduction}
In this paper we focus on the discussion of compact oriented Riemannian 4-manifolds with positive sectional curvature. We begin with a famous conjecture addressed by Heinz Hopf:

\emph{ $S^{2}\times S^{2}$ does not admit a metric with positive sectional curvature.}

Until this day the conjecture is still widely open except for some special cases. For instance, see Berger \cite{[B1]}, Tasagas \cite{[T]}, Bourguignon \cite{[Bo]}, Seaman \cite{[S]}, Hsiang-Kleiner \cite{[HK]} and others. In \cite{[LL]} Li-Liu  proposed a geometric flow approach to this problem.

Let $M$ be a compact oriented 4-manifold. We denote by $b_{2}^{+}$ (resp. $b_{2}^{-}$) the number of positive (resp. negative) eigenvalues of the intersection form.
 $M$ is said to be definite if either $b_{2}^{+}=0$ or $b_{2}^{-}=0$. $S^{2}\times S^{2}$ is not definite since $b_{2}^{+}(S^{2}\times S^{2})=b_{2}^{-}(S^{2}\times S^{2})=1$.

We shall prove
\begin{theorem}\label{t1.1}
Let $M$ be a compact oriented Riemannian 4-manifold with sectional curvature $sec_{M}>0$. If any harmonic 2-form $\phi$ on $M$ satisfies
\begin{equation}\label{f1.1}
|\nabla\phi|^{2}\geq2|d|\phi||^{2},
\end{equation}
then $M$ is definite.
\end{theorem}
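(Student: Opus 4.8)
The plan is to translate ``definite'' into a statement about harmonic forms and then to rule out the simultaneous existence of a nonzero self-dual and a nonzero anti-self-dual harmonic $2$-form. On a compact oriented $4$-manifold the Hodge star is an involution on $\Lambda^2$, so the space of harmonic $2$-forms splits as $\mathcal H^2=\mathcal H^+\oplus\mathcal H^-$ with $b_2^{\pm}=\dim\mathcal H^{\pm}$; hence $M$ is definite exactly when $\mathcal H^+=0$ or $\mathcal H^-=0$. I would therefore assume, for contradiction, that there are nonzero $\omega\in\mathcal H^+$ and $\eta\in\mathcal H^-$.

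First I would record the refined Bochner inequality. Writing the Hodge Laplacian as $\Delta_H=\nabla^*\nabla+\mathcal W$ with Weitzenb\"ock curvature term $\mathcal W$, a harmonic form $\phi$ satisfies $\tfrac12\Delta|\phi|^2=|\nabla\phi|^2+\langle\mathcal W\phi,\phi\rangle$, where $\Delta$ is the Laplace--Beltrami operator on functions. Combining this with $\tfrac12\Delta|\phi|^2=|\phi|\,\Delta|\phi|+|\nabla|\phi||^2$ and expanding $\Delta\log|\phi|$ gives, wherever $\phi\neq0$,
\[
\Delta\log|\phi|=\frac{|\nabla\phi|^2-2|\nabla|\phi||^2+\langle\mathcal W\phi,\phi\rangle}{|\phi|^2}\ \ge\ \frac{\langle\mathcal W\phi,\phi\rangle}{|\phi|^2},
\]
where the inequality uses hypothesis \eqref{f1.1} exactly once; its constant $2$ is precisely what is needed to cancel the gradient terms.

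The geometric heart is an algebraic identity at each point. Since $SO(4)$ acts on $\Lambda^+\oplus\Lambda^-$ through the two factors of $SO(3)\times SO(3)$, at any point I can choose an oriented orthonormal frame $\{e_i\}$ simultaneously putting both forms in canonical shape, $\omega=a(e^{12}+e^{34})$ and $\eta=b(e^{12}-e^{34})$, where $e^{ij}=e^i\wedge e^j$. A direct Weitzenb\"ock computation, using $\langle\mathcal W(e^{ij}),e^{ij}\rangle=\mathrm{Ric}_{ii}+\mathrm{Ric}_{jj}-2R_{ijij}$ and that the off-diagonal part contributes only a multiple of $R_{1234}$ with opposite signs on $\Lambda^+$ and $\Lambda^-$, yields
\[
\frac{\langle\mathcal W\omega,\omega\rangle}{|\omega|^2}+\frac{\langle\mathcal W\eta,\eta\rangle}{|\eta|^2}=2\,(K_{13}+K_{14}+K_{23}+K_{24})>0,
\]
the positivity coming from $sec_M>0$. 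Note the $R_{1234}$ contributions cancel, so the precise cross-coefficient is irrelevant; what matters is that the diagonal term $\langle\mathcal W(e^{12}),e^{12}\rangle$ reduces to a sum of sectional curvatures.

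Finally, adding the two inequalities from the second step and substituting the identity gives $\Delta\log\!\big(|\omega|\,|\eta|\big)\ge 2(K_{13}+K_{14}+K_{23}+K_{24})>0$ wherever $\omega,\eta\neq0$. By Aronszajn's unique continuation theorem the zero sets of $\omega$ and $\eta$ are nowhere dense, so the continuous function $|\omega|\,|\eta|$ on the compact manifold attains a positive maximum at some point $p$ at which both forms are nonzero; there $\log(|\omega|\,|\eta|)$ is smooth and has an interior maximum, forcing $\Delta\log(|\omega|\,|\eta|)(p)\le 0$, a contradiction. I expect the main obstacle to be the simultaneous-diagonalization lemma together with the Weitzenb\"ock bookkeeping that makes the curvature combination collapse to a sum of sectional curvatures rather than an indefinite expression, and the care needed with the zero sets so that the maximum principle applies at a point where the relevant logarithms are smooth.
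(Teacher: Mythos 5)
Your proposal is correct, and its computational core is the same as the paper's: the pointwise identity you call the ``algebraic heart'' --- in a frame that simultaneously normalizes $\omega=a(e^{12}+e^{34})$ and $\eta=b(e^{12}-e^{34})$, the two normalized Weitzenb\"ock terms sum to $2(R_{1313}+R_{1414}+R_{2323}+R_{2424})$ with the $R_{1234}$ contributions cancelling --- is precisely the content of the paper's Lemma \ref{l2.1} and Proposition \ref{p2.3}, and your use of hypothesis (\ref{f1.1}), whose constant $2$ exactly absorbs the gradient terms in $\Delta\log|\phi|$, plays the same role as the paper's step $|\nabla\phi_{+}|^{2}\geq|\nabla F|^{2}/F$ (resp. $|\nabla\phi_{-}|^{2}\geq|\nabla G|^{2}/G$). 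Where you genuinely differ is in the setup and the finish. The paper takes a single harmonic form $\phi$, sets $F=\frac{1}{2}|\phi_{+}|^{2}$ and $G=\frac{1}{2}|\phi_{-}|^{2}$, proves the identity (\ref{f2.3}) for $\Delta(FG)$, and integrates over the closed manifold, so that Cauchy--Schwarz plus the hypothesis makes the non-curvature terms nonnegative and forces $\int_{M}KFG\,dv\leq0$, hence $FG\equiv0$; you instead take two independent harmonic forms of opposite duality type, work with $\Delta\log(|\omega|\,|\eta|)$, and close with the maximum principle at an interior maximum. Your version has the merit of making explicit a point the paper passes over in silence: some form of unique continuation (Aronszajn) is needed for the argument not to be vacuous --- in your case to produce a point where $|\omega|\,|\eta|>0$, in the paper's case to pass from $FG\equiv0$ to $F\equiv0$ or $G\equiv0$. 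What the paper's formulation buys in return is that (\ref{f2.3}) is an identity rather than an inequality, and it is reused to derive the corollaries of Section 3 (constancy of $F$, $G$, $FG$, or $F\pm G$ forces self-duality or anti-self-duality), which a one-shot contradiction argument does not yield.
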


By Theorem \ref{t1.1}, to prove Hopf conjecture, one only needs to verify that formula (\ref{f1.1}) holds on compact oriented Riemannian 4-manifold with positive sectional curvature.

The proof of Theorem 1.1 is based on a Bochner formula of mixed type (Theorem 2.1), which is proved in Section 2. Formula (\ref{f1.1}) is closely related to Kato inequality. We will discuss the details in Section 4.

A general conjecture can be  formulated as: \emph{A compact simply connected 4-manifold with positive sectional curvature is homeomorphic (or diffeomorphic ) to $S^{4}$ or $\mathbb{C}\mathbb{P}^{2}$.}

We remark that the results in this paper were obtained in 2012. While trying to derive more complete result from the formulas in this paper, we have circulated this paper among many colleagues for the past several years.
%%%%%%%%%%%%%%%%%%%%%%%%%%%%%%%%%%%%%%%%%%%%%%%%%%%%%%%%%%%%%%%%%%%%%%%%%%%%%%%%%%%%%
\section{A Bochner type formula}

\subsection{}
 Let $M$ be a compact Riemannian manifold. Let $\Delta=d\delta+\delta d$ be the Hodge-Laplacian, where $d$ is the exterior differentiation and $\delta$
is the adjoint of $d$. Let $\phi\in\Omega^{k}(M)$ be a smooth $k$-form and $\{e_{i}, 1\leq i\leq n\}$ be an orthonormal frame. Then we have the well-known Weitzenb\"{o}ck formula
\begin{equation}\label{f2.1}
\Delta\phi=\sum_{i}\nabla^{2}_{e_{i}e_{i}}\phi-\sum_{i,j}\omega^{i}\wedge i(e_{j})R_{e_{i}e_{j}}\phi,
\end{equation}
where $\nabla^{2}_{XY}=\nabla_{X}\nabla_{Y}-\nabla_{\nabla_{X}Y}$ and $R_{XY}=-\nabla_{X}\nabla_{Y}+\nabla_{Y}\nabla_{X}+\nabla_{[X,Y]}$.
We say that a $k$-form $\phi$ is \emph{harmonic} if $\Delta\phi=0$. The famous Hodge theorem states that the de Rham cohomology $H^{k}_{dR}(M)$ is isomorphic to the space spanned by harmonic $k$-forms. Hodge theorem is a bridge to connect curvature and topology of manifolds.

Let $\phi=\sum_{i,j}\phi_{ij}\omega^{i}\wedge\omega^{j}$ be a harmonic 2-form, where $\{\omega^{i}, 1\leq i\leq n\}$ is the dual frame of $\{e_{i}, 1\leq i\leq n\}$ which is normal at $p$, i.e. $\nabla_{e_{i}}e_{j}(p)=0$.  By formula (\ref{f2.1}), we can obtain (c.f. \cite{[BS]} or \cite{[B]})
\begin{equation}\label{f2.2}
\Delta\phi_{ij}(p)=\sum_{k}(Ric_{ik}\phi_{kj}+Ric_{jk}\phi_{ik})-2\sum_{k,l}R_{ikjl}\phi_{kl},
\end{equation}
where $R_{ijkl}=\langle R(e_{i},e_{j})e_{k},e_{l}\rangle$ is the curvature tensor and $Ric_{ij}=\sum_{k}\langle R(e_{k},e_{i})e_{k},e_{j}\rangle$ is the Ricci tensor.
We will present a proof for (\ref{f2.2}) in the appendix.

Recall that the Hodge star operator is defined by
$$\ast( \omega^{1}\wedge\cdot\cdot\cdot\wedge\omega^{p})\coloneqq \omega^{p+1}\wedge\cdot\cdot\cdot\wedge\omega^{n}.$$
It can be extended linearly to all smooth forms. Obviously Hodge star operator maps harmonic forms to harmonic forms. A form $\phi$ is called self-dual if $\ast\phi=\phi$ and anti-self-dual if $\ast\phi=-\phi$. If $M$ is of 4k dimension, by Hodge theorem and de Rham theorem, we have $$b_{2k}^{+}=\dim\{\phi\in\Omega^{2k}(M): \ast\phi=\phi, \Delta\phi=0\}$$ and $$b_{2k}^{-}=\dim\{\phi\in\Omega^{2k}(M): \ast\phi=-\phi, \Delta\phi=0\}.$$ The signature $\sigma(M)=b_{2k}^{+}-b_{2k}^{-}$ and $b_{2k}=b_{2k}^{+}+b_{2k}^{-}$. So $M$ is definite if and only if $b_{2}(M)=|\sigma(M)|$.

It is also easy to see that $M$ is definite if and only if any harmonic 2k-form $\phi$ is self-dual or anti-self-dual.

\subsection{}
Let $M$ be a compact oriented Riemannian 4-manifold. Let $\phi$ be a harmonic 2-form on $M$. We denote by $$\phi_{+}=\phi+\ast\phi$$ and $$\phi_{-}=\phi-\ast\phi.$$
Then $\ast \phi_{+}=\phi_{+}$ and $\ast \phi_{-}=-\phi_{-}$. Set $F=\frac{1}{2}|\phi_{+}|^{2}$ and $G=\frac{1}{2}|\phi_{-}|^{2}$. We have the following Bochner type formula

\begin{theorem}
\begin{equation}\label{f2.3}
\Delta (FG)= 8KFG+G|\nabla\phi_{+}|^{2}+F|\nabla\phi_{-}|^{2}+2\langle\nabla F, \nabla G\rangle,
\end{equation}
where $K$ is the sum of some sectional curvatures determined by $\phi$. The explicit form of $K$ is given below (\ref{f2.7}).
\end{theorem}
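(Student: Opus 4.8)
The plan is to compute $\Delta(FG)$ by the product rule and reduce everything to Bochner-type identities for the self-dual and anti-self-dual parts of $\phi$. I would first record the elementary identity
\begin{equation*}
\Delta(FG) = F\,\Delta G + G\,\Delta F - 2\langle \nabla F,\nabla G\rangle,
\end{equation*}
using the analyst's sign convention $\Delta = -\sum_i \nabla^2_{e_ie_i}$ on functions (so that $\Delta = \delta d$ on $0$-forms). The factor $-2\langle\nabla F,\nabla G\rangle$ on the right is what will eventually flip sign to produce the $+2\langle\nabla F,\nabla G\rangle$ in (\ref{f2.3}); I would keep careful track of which Laplacian convention is in force, since the same symbol $\Delta$ is being used for the Hodge-Laplacian on forms in (\ref{f2.1})-(\ref{f2.2}) and this mismatch of conventions is a frequent source of sign errors.

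The heart of the matter is a Bochner formula for $F = \tfrac12|\phi_+|^2$ and $G=\tfrac12|\phi_-|^2$ individually. Since $\phi$ is harmonic and $\ast$ commutes with $\Delta$, both $\phi_+$ and $\phi_-$ are harmonic, so I can apply the standard Weitzenb\"ock identity (\ref{f2.1}) to each. For a harmonic form $\psi$, computing $\Delta|\psi|^2$ at a point $p$ with a normal frame gives the Bochner formula
\begin{equation*}
\tfrac12\Delta|\psi|^2 = -|\nabla\psi|^2 + \langle \psi, \mathcal{R}\psi\rangle,
\end{equation*}
where $\mathcal{R}$ is the Weitzenb\"ock curvature operator appearing in (\ref{f2.1}); its components are exactly the Ricci and curvature-tensor expression displayed in (\ref{f2.2}). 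Applying this to $\phi_+$ yields $\Delta F = |\nabla\phi_+|^2 - \langle\phi_+,\mathcal{R}\phi_+\rangle$ and similarly for $G$ (modulo the sign convention above). Substituting these into the product-rule identity collects the $|\nabla\phi_+|^2$ and $|\nabla\phi_-|^2$ terms, each weighted by the opposite factor $G$ or $F$, matching the middle terms of (\ref{f2.3}).

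The curvature terms $-G\langle\phi_+,\mathcal{R}\phi_+\rangle - F\langle\phi_-,\mathcal{R}\phi_-\rangle$ are then what must be organized into the single expression $8KFG$. This is the main obstacle, and it is where dimension four and orientation are essential. I would diagonalize the self-dual and anti-self-dual pieces: at each point one can choose an oriented orthonormal frame so that $\phi_+$ and $\phi_-$ are simultaneously expressed in terms of the standard bases of $\Lambda^2_+$ and $\Lambda^2_-$ (spanned by $\omega^1\wedge\omega^2 \pm \omega^3\wedge\omega^4$ and cyclic permutations). In such a frame the Weitzenb\"ock curvature contractions for $2$-forms reduce, via (\ref{f2.2}) and the first Bianchi identity, to specific sectional-curvature combinations. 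I expect that after this reduction the two curvature contributions combine so that their coefficient, once the common factor $FG$ is extracted, is a sum of sectional curvatures; positivity of $\mathrm{sec}_M$ is precisely what will later make $K>0$. I would carry out this frame computation explicitly to identify $K$ with the expression announced as (\ref{f2.7}), checking that the algebra is consistent by verifying the trace/scalar-curvature normalizations on a model such as $\mathbb{CP}^2$ or $S^2\times S^2$.
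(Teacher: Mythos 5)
Your proposal is correct and is essentially the paper's own argument: the paper fixes a frame with $\phi(p)=\lambda_{1}\omega^{1}\wedge\omega^{2}+\lambda_{2}\omega^{3}\wedge\omega^{4}$, applies the componentwise Weitzenb\"ock formula (\ref{f2.2}) and the first Bianchi identity to obtain $\Delta F=|\nabla\phi_{+}|^{2}+4(K-R_{1234})F$ and $\Delta G=|\nabla\phi_{-}|^{2}+4(K+R_{1234})G$ (Lemma~\ref{l2.1} and Proposition~\ref{p2.3}), and then concludes by the product rule --- exactly the three steps you describe. The two details you left open both work out: the cancellation you anticipate is precisely that the non-sectional term $R_{1234}$ enters the two formulas with opposite signs and so drops out of $G\Delta F+F\Delta G$, giving $8KFG$; and the sign of $2\langle\nabla F,\nabla G\rangle$ is settled by the fact that the paper's $\Delta$ on functions is the trace of the Hessian, for which $\Delta(FG)=F\Delta G+G\Delta F+2\langle\nabla F,\nabla G\rangle$.
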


 One of the interests of this formula is that the curvature term $K$ contains \emph{only} sectional curvature. This avoids the pinching assumption of sectional curvature, which is always necessary before.

For any $p\in M$, we can choose an orthonormal basis $\{e_{1}, e_{2},e_{3}, e_{4}\}$ of $T_{p}M$ such that
\begin{equation}\label{f2.4}
\varphi(p)=\lambda_{1}\omega^{1}\wedge \omega^{2}+\lambda_{2}\omega^{3}\wedge\omega^{4}.
\end{equation}
 That is to say, locally we can write $$\phi=f_{1}\omega^{1}\wedge \omega^{2}+f_{2}\omega^{3}\wedge \omega^{4}+f_{3}\omega^{1}\wedge \omega^{3}+f_{4}\omega^{1}\wedge \omega^{4}+f_{5}\omega^{2}\wedge \omega^{3}+f_{6}\omega^{2}\wedge \omega^{4}$$
and $$f_{1}(p)=\lambda_{1}, f_{2}(p)=\lambda_{2}, f_{3}(p)=f_{4}(p)=f_{5}(p)=f_{6}(p)=0.$$ Since any orthonormal basis at a point $p$ can be extended to an orthonormal frame normal at $p$. We may always assume that the frame field is normal at $p$.

Now we calculate the Laplacian of $F=\frac{1}{2}|\phi_{+}|^{2}=|\phi|^{2}+\ast(\phi\wedge\phi)$ and $G=\frac{1}{2}|\phi_{-}|^{2}=|\phi|^{2}-\ast(\phi\wedge\phi)$.
Note that under (\ref{f2.4}) we have $F(p)=(f_{1}+f_{2})^{2}(p)$ and $G(p)=(f_{1}-f_{2})^{2}(p)$.

\begin{lemma}\label{l2.1}
We have
\begin{equation}
\Delta f_{1}=2(Kf_{1}-R_{1234} f_{2})
\end{equation}
and
\begin{equation}
\Delta f_{2}=2(Kf_{2}-R_{1234} f_{1}),
\end{equation}
where
\begin{equation}\label{f2.7}
K=\frac{1}{2}(R_{1313}+R_{1414}+R_{2323}+R_{2424}).
\end{equation}
\end{lemma}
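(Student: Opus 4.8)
The plan is to feed the componentwise Bochner formula (\ref{f2.2}) the two distinguished components $f_{1}=\phi_{12}$ and $f_{2}=\phi_{34}$, evaluated at $p$ in the frame adapted to (\ref{f2.4}). Once (\ref{f2.2}) is granted, the proof is purely algebraic; the whole point is to exploit that at $p$ every component of $\phi$ except $\phi_{12}=-\phi_{21}=f_{1}$ and $\phi_{34}=-\phi_{43}=f_{2}$ vanishes, and then to reorganize the surviving curvature coefficients using the symmetries of the Riemann tensor.

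I would first take $(i,j)=(1,2)$ in (\ref{f2.2}). In the Ricci part $\sum_{k}(Ric_{1k}\phi_{k2}+Ric_{2k}\phi_{1k})$ only the indices $k$ giving a nonvanishing $\phi$ at $p$ survive, so it collapses to $(Ric_{11}+Ric_{22})f_{1}$. Expanding the diagonal Ricci curvatures as sums of sectional curvatures, $Ric_{11}=R_{1212}+R_{1313}+R_{1414}$ and $Ric_{22}=R_{1212}+R_{2323}+R_{2424}$ in the sign convention of the paper, so this term equals $(2R_{1212}+R_{1313}+R_{1414}+R_{2323}+R_{2424})f_{1}$.

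For the curvature part $-2\sum_{k,l}R_{1k2l}\phi_{kl}$, only the index pairs $(k,l)\in\{(1,2),(2,1),(3,4),(4,3)\}$ contribute at $p$. The pair $(1,2)$ drops because $R_{11\cdot\cdot}=0$; the pair $(2,1)$ contributes $R_{1221}\phi_{21}=R_{1212}f_{1}$; and the pairs $(3,4),(4,3)$ contribute $(R_{1324}-R_{1423})f_{2}$. The one genuinely nontrivial identity required is that the first Bianchi identity, combined with the antisymmetry in each index pair, collapses $R_{1324}-R_{1423}$ to the single term $R_{1234}$. Hence $-2\sum_{k,l}R_{1k2l}\phi_{kl}=-2R_{1212}f_{1}-2R_{1234}f_{2}$. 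Adding this to the Ricci part, the $2R_{1212}f_{1}$ terms cancel and we are left with $\Delta f_{1}=(R_{1313}+R_{1414}+R_{2323}+R_{2424})f_{1}-2R_{1234}f_{2}=2(Kf_{1}-R_{1234}f_{2})$ by (\ref{f2.7}). The identity for $\Delta f_{2}$ is obtained verbatim with $(i,j)=(3,4)$: now $Ric_{33}+Ric_{44}$ supplies the same four mixed sectional curvatures together with $2R_{3434}$, the curvature part supplies $-2R_{3434}f_{2}-2R_{1234}f_{1}$ after the analogous Bianchi reduction of $R_{3142}-R_{3241}$, and the $2R_{3434}$ terms cancel.

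The only real obstacle is the curvature bookkeeping. One must keep straight the antisymmetry in each pair of indices, the pair-interchange symmetry, and, crucially, apply the first Bianchi identity correctly to reduce the mixed contribution $R_{1324}-R_{1423}$ (respectively $R_{3142}-R_{3241}$) to $R_{1234}$. The decisive structural feature is that the ``diagonal'' sectional curvatures $R_{1212}$ and $R_{3434}$ coming from the Ricci terms are exactly cancelled by their counterparts in the curvature term, so that only the four ``mixed-plane'' sectional curvatures making up $2K$ remain; everything else is forced by the vanishing of the off-diagonal components of $\phi$ at $p$.
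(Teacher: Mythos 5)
Your proposal is correct and is essentially the paper's own proof: both arguments plug the adapted-frame components into (\ref{f2.2}) at $(i,j)=(1,2)$ and $(3,4)$, expand the Ricci terms into sectional curvatures so that the $R_{1212}$ (resp.\ $R_{3434}$) contributions cancel against the curvature sum, and invoke the first Bianchi identity to reduce $R_{1324}-R_{1423}$ (resp.\ $R_{3142}-R_{3241}$) to $R_{1234}$. The only cosmetic difference is your normalization $\phi_{12}=f_{1}$ versus the paper's $\phi_{12}=f_{1}/2$, which is immaterial because (\ref{f2.2}) is linear and homogeneous in the components.
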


\begin{proof}
We write $\phi$ as $$\phi=\frac{f_{1}}{2}\omega^{1}\wedge \omega^{2}+\frac{-f_{1}}{2}\omega^{2}\wedge \omega^{1}+\frac{f_{2}}{2}\omega^{3}\wedge \omega^{4}+\frac{-f_{2}}{2}\omega^{4}\wedge \omega^{3}.$$
By formula (\ref{f2.2}), we obtain
$$\Delta\frac{f_{1}}{2}=\frac{f_{1}}{2}(Ric_{11}+Ric_{22})-(R_{1324}-R_{1423})f_{2}-R_{1212}f_{1}$$
and $$\Delta\frac{f_{2}}{2}=\frac{f_{2}}{2}(Ric_{33}+Ric_{44})-(R_{3142}-R_{3241})f_{1}-R_{3434}f_{2}.$$
Then the lemma follows from the  Bianchi identity $$R_{1234}+R_{1342}+R_{1423}=0.$$
\end{proof}

Now we will prove
\begin{proposition}\label{p2.3}
\begin{equation}
\Delta F=|\nabla\phi_{+}|^{2}+4(K-R_{1234})F
\end{equation}
and
\begin{equation}
\Delta G=|\nabla\phi_{-}|^{2}+4(K+R_{1234})G.
\end{equation}
\end{proposition}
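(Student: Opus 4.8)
The plan is to verify both identities pointwise at an arbitrary $p\in M$, using the frame adapted to $\phi$ in (\ref{f2.4}) together with the orthogonal decomposition of $\phi_{\pm}$ into the standard self-dual/anti-self-dual basis. Writing $\phi$ in the frame as in the excerpt and applying the Hodge star (with the standard signs) gives
\begin{equation*}
\phi_{+}=(f_{1}+f_{2})(\omega^{1}\wedge\omega^{2}+\omega^{3}\wedge\omega^{4})+(f_{3}-f_{6})(\omega^{1}\wedge\omega^{3}-\omega^{2}\wedge\omega^{4})+(f_{4}+f_{5})(\omega^{1}\wedge\omega^{4}+\omega^{2}\wedge\omega^{3}).
\end{equation*}
Since the three bracketed $2$-forms are mutually orthogonal with squared norm $2$, this yields the \emph{global} identity $F=\tfrac12|\phi_{+}|^{2}=(f_{1}+f_{2})^{2}+(f_{3}-f_{6})^{2}+(f_{4}+f_{5})^{2}$. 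The essential observation is that, although $f_{3}=f_{4}=f_{5}=f_{6}=0$ at $p$, the squares of the last two combinations are \emph{not} constant near $p$, so they still contribute to the second derivatives of $F$ at $p$.

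Next I would apply $\Delta=\sum_{i}\nabla^{2}_{e_{i}e_{i}}$ together with the elementary identity $\Delta(g^{2})=2g\,\Delta g+2|\nabla g|^{2}$ to each of the three squares. Evaluating at $p$, the factors $2g\,\Delta g$ arising from $f_{3}-f_{6}$ and $f_{4}+f_{5}$ vanish because those functions vanish at $p$, while their gradient terms survive. Because the frame is normal at $p$, the covariant derivatives of the basis $2$-forms vanish there, so that
\begin{equation*}
|\nabla\phi_{+}|^{2}(p)=2\big(|\nabla(f_{1}+f_{2})|^{2}+|\nabla(f_{3}-f_{6})|^{2}+|\nabla(f_{4}+f_{5})|^{2}\big)(p).
\end{equation*}
Collecting the three product-rule expansions and recognizing that their gradient terms assemble into exactly this quantity gives $\Delta F(p)=|\nabla\phi_{+}|^{2}+2(f_{1}+f_{2})\,\Delta(f_{1}+f_{2})$ at $p$.

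Finally, Lemma \ref{l2.1} yields $\Delta(f_{1}+f_{2})=2(K-R_{1234})(f_{1}+f_{2})$, whence $2(f_{1}+f_{2})\,\Delta(f_{1}+f_{2})=4(K-R_{1234})(f_{1}+f_{2})^{2}=4(K-R_{1234})F$ at $p$; as $p$ is arbitrary this establishes the first formula. The identity for $G$ follows verbatim from $\phi_{-}=(f_{1}-f_{2})(\omega^{1}\wedge\omega^{2}-\omega^{3}\wedge\omega^{4})+\cdots$ and $\Delta(f_{1}-f_{2})=2(K+R_{1234})(f_{1}-f_{2})$, which merely flips the sign of $R_{1234}$. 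I expect the main obstacle to lie in the middle step: one must check carefully that the off-diagonal coefficient functions, though vanishing at $p$, reconstitute the \emph{full} $|\nabla\phi_{+}|^{2}$ rather than only its $(f_{1}+f_{2})$-part, and that the normalization factors from $|\omega^{i}\wedge\omega^{j}\pm\ast(\cdots)|^{2}=2$ and the normality of the frame at $p$ are tracked consistently.
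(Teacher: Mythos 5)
Your proof is correct and follows essentially the same route as the paper's: the same adapted normal frame, Lemma \ref{l2.1} for the zeroth-order terms, and normality of the frame at $p$ to identify the surviving gradient terms with $\tfrac{1}{2}|\nabla\phi_{\pm}|^{2}$. The only difference is bookkeeping: you expand $F=(f_{1}+f_{2})^{2}+(f_{3}-f_{6})^{2}+(f_{4}+f_{5})^{2}$ directly from the self-dual basis, while the paper splits $F=|\phi|^{2}+\ast(\phi\wedge\phi)$ and adds the two Laplacians, which is the identical computation since $(f_{1}+f_{2})^{2}+(f_{3}-f_{6})^{2}+(f_{4}+f_{5})^{2}=\sum_{i}f_{i}^{2}+2(f_{1}f_{2}-f_{3}f_{6}+f_{4}f_{5})$.
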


\begin{proof}
From Lemma (\ref{l2.1}), one gets
\begin{equation}\label{f2.10}
f_{1}\Delta f_{1}+f_{2}\Delta f_{2} = 2K(f^{2}_{1}+f^{2}_{2})-4R_{1234}f_{1}f_{2}
\end{equation}
and
\begin{equation}\label{f2.11}
f_{1}\Delta f_{2}+f_{2}\Delta f_{1}=4Kf_{1}f_{2}-2R_{1234}(f^{2}_{1}+f^{2}_{2}).
\end{equation}
Combing (\ref{f2.10}), (\ref{f2.11}) and Green-Stokes formula, one obtains
\begin{eqnarray*}
\frac{1}{2}\Delta|\phi|^{2}& = &\frac{1}{2}\Delta(\sum_{i=1}^{6}f_{i}^{2})=\sum_{i=1}^{6}(f_{i}\Delta f_{i}+|\nabla f_{i}|^{2})\\
                              & = &f_{1}\Delta f_{1}+f_{2}\Delta f_{2}+\sum_{i=1}^{6}|\nabla f_{i}|^{2}\\
                              & = &\sum_{i=1}^{6}|\nabla f_{i}|^{2}+2K(f^{2}_{1}+f^{2}_{2})-4R_{1234}f_{1}f_{2}
\end{eqnarray*}
and
\begin{eqnarray*}
\frac{1}{2}\Delta[\ast(\phi\wedge\phi)]& = &\Delta(f_{1}f_{2})-\Delta(f_{3}f_{6})+\Delta(f_{4}f_{5})\\
                                             & = &2\langle\nabla f_{1},\nabla f_{2}\rangle-2\langle\nabla f_{3},\nabla f_{6}\rangle\\
                                             &&+2\langle\nabla f_{4},\nabla f_{5}\rangle+f_{1}\Delta f_{2}+f_{2}\Delta f_{1}\\
                                              & = &2\langle\nabla f_{1},\nabla f_{2}\rangle-2\langle\nabla f_{3},\nabla f_{6}\rangle+2\langle\nabla f_{4},\nabla f_{5}\rangle\\
                                              &&+4Kf_{1}f_{2}-2R_{1234}(f^{2}_{1}+f^{2}_{2}).
\end{eqnarray*}

Hence
\begin{eqnarray*}
\frac{1}{2}\Delta F& =&\frac{1}{2}\Delta|\phi|^{2}+\frac{1}{2}\Delta[\ast(\phi\wedge\phi)]\\
                   & =&|\nabla(f_{1}+f_{2})|^{2}+|\nabla(f_{3}-f_{6})|^{2}+|\nabla(f_{4}+f_{5})|^{2}\\
                   &&+2(K-R_{1234})(f_{1}+f_{2})^{2}\\
                   & = &\frac{1}{2}|\nabla(\phi+*\phi)|^{2}+2(K-R_{1234})F
\end{eqnarray*}
and
\begin{eqnarray*}
\frac{1}{2}\Delta G& =&\frac{1}{2}\Delta|\phi|^{2}-\frac{1}{2}\Delta[\ast(\phi\wedge\phi)]\\
                  & =&|\nabla(f_{1}-f_{2})|^{2}+|\nabla(f_{3}+f_{6})|^{2}+|\nabla(f_{4}-f_{5})|^{2}\\
                  &&+2(K+R_{1234})(f_{1}-f_{2})^{2}\\
                   & = & \frac{1}{2}|\nabla(\phi-*\phi)|^{2}+2(K+R_{1234})G.
\end{eqnarray*}

This completes the proof of the proposition.
\end{proof}

Theorem 2.1 follows from Proposition 2.3 directly.

%%%%%%%%%%%%%%%%%%%%%%%%%%%%%%%%%%%%%%%%%%%%%%%%%%%%%%%%%%%%%%%%%%%%%%%%%%%%%%%
\section{A proof of Theorem 1.1 and some vanishing results}

We use formula (\ref{f2.3}) to prove Theorem \ref{t1.1}. If $|\nabla\phi|^{2}\geq2|d|\phi||^{2}$ holds for any harmonic 2-form $\phi$. Then $$ |\nabla\phi_{+}|^{2}\geq\frac{|\nabla F|^{2}}{F}$$ and $$ |\nabla\phi_{-}|^{2}\geq\frac{|\nabla G|^{2}}{G}.$$
By using Schwarz inequality, one has $$G|\nabla\phi_{+}|^{2}+F|\nabla\phi_{-}|^{2}+2\langle\nabla F, \nabla G\rangle \geq 2|\nabla F|\cdot|\nabla G|+2\langle\nabla F, \nabla G\rangle\geq0.$$ By (\ref{f2.3}), one has $$\int_{M}KFGdv\leq0.$$ Since $K>0$, it follows that $F\equiv 0$ or $G\equiv 0$, i.e. $\phi$ is self-dual or anti-self-dual. Hence $M$ is definite.

\begin{corollary}
Let $M$ be a compact Riemannian 4-manifold with positive sectional curvature. If one of the following four conditions holds:\\
\textbf{1)} $F$ or $G$ is constant;\\
\textbf{2)} $FG$ is constant;\\
\textbf{3)} $F+G$ is constant;\\
\textbf{4)} $F-G$ is constant.\\
Then either $F\equiv0$ or $G\equiv0$, i.e. $\phi$ is self-dual or anti-self-dual.
\end{corollary}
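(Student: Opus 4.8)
The plan is to reduce all four cases to the single statement $FG\equiv 0$, from which unique continuation for the harmonic forms $\phi_{+}=\phi+\ast\phi$ and $\phi_{-}=\phi-\ast\phi$, together with connectedness of $M$, forces $\phi_{+}\equiv 0$ or $\phi_{-}\equiv 0$, exactly as in the proof of Theorem \ref{t1.1}. The only tool I would use is the product formula (\ref{f2.3}): on its right-hand side the terms $8KFG$, $G|\nabla\phi_{+}|^{2}$ and $F|\nabla\phi_{-}|^{2}$ are all nonnegative (recall $K>0$ and $F,G\geq 0$), and the single obstruction to positivity is the cross term $2\langle\nabla F,\nabla G\rangle$. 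Unlike in Theorem \ref{t1.1}, no Kato-type inequality is available here, so this cross term cannot be absorbed globally; the entire point is that each constancy hypothesis is precisely what lets me annihilate or sign-control it at one carefully chosen point, via the maximum principle (in the sign convention of Section 2, for which $\tfrac12\Delta(f^{2})=f\Delta f+|\nabla f|^{2}$, so $\Delta$ is nonpositive at an interior maximum and $\int_{M}\Delta(\cdot)\,dv=0$).

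The constant case I would handle globally. If $F$ (say) is constant then $\nabla F\equiv 0$, the cross term vanishes everywhere, and (\ref{f2.3}) reads $\Delta(FG)\geq 8KFG$; integrating over the closed manifold gives $\int_{M}KFG\,dv\leq 0$, and since $K>0$ this forces $FG\equiv 0$. The case $FG$ constant is instead pointwise: I would evaluate (\ref{f2.3}) at a maximum point $p_{0}$ of $F$. There $\nabla F=0$, and because $\nabla(FG)=0$ this forces $\nabla G=0$ as well, so the cross term dies; moreover $\Delta(FG)=0$ since $FG$ is constant. Then (\ref{f2.3}) collapses to a vanishing sum of nonnegative terms, whence $K(p_{0})F(p_{0})G(p_{0})=0$, and as $K>0$ the constant value $FG$ must be $0$.

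For $F\pm G$ constant the product $FG$ is no longer constant, but it becomes a quadratic in $F$, and I would again run the maximum principle. When $F-G=c$, at a maximum $p_{0}$ of $F$ one has $\nabla F=\nabla G=0$; writing $FG=F^{2}-cF$ and noting that the coefficient $2F-c$ is nonnegative at the maximum (since $F\geq 0$ and $F\geq c$, the latter forced by $G\geq 0$), one gets $\Delta(FG)(p_{0})=(2F(p_{0})-c)\Delta F(p_{0})\leq 0$, which against (\ref{f2.3}) forces $FG(p_{0})=0$ and hence, $p_{0}$ being a maximum of the nonnegative function $FG$, $FG\equiv 0$. When $F+G=c$ the factor $c-2F$ is no longer sign-controlled at the maximum of $F$, so I would instead apply the maximum principle at a maximum $p_{1}$ of the product $FG$ itself. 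If $\nabla F(p_{1})=0$ the previous reasoning applies verbatim; the only other possibility allowed by $\nabla(FG)(p_{1})=0$ is $F(p_{1})=G(p_{1})=c/2$, and here the computation $\Delta(FG)(p_{1})=-2|\nabla F(p_{1})|^{2}$ exactly cancels the cross term $2\langle\nabla F,\nabla G\rangle(p_{1})=-2|\nabla F(p_{1})|^{2}$ in (\ref{f2.3}), leaving $0=2K(p_{1})c^{2}+\tfrac{c}{2}\big(|\nabla\phi_{+}|^{2}+|\nabla\phi_{-}|^{2}\big)(p_{1})$, which is impossible once $c>0$.

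I expect the step $F+G$ constant to be the main obstacle: the naive choice of maximizing $F$ loses the sign of $c-2F$, and one must recognize that maximizing $FG$ and exploiting the cancellation of the cross term against $\Delta(FG)$ at the critical value $F=G=c/2$ is exactly what rescues the argument. A secondary point worth stating explicitly in each case is the passage from $FG\equiv 0$ to $F\equiv 0$ or $G\equiv 0$, which is not purely pointwise but relies on unique continuation for the harmonic forms $\phi_{\pm}$ on the connected manifold $M$.
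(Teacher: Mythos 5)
Your proposal is correct, but it takes a genuinely different route from the paper on conditions 3) and 4). For 1) and 2) you do essentially what the paper does: integrate (\ref{f2.3}) when the cross term vanishes identically, and evaluate (\ref{f2.3}) at a critical point of $F$ when $FG$ is constant (the paper uses any critical point, you use a maximum; also note the cross term at $p_{0}$ dies from $\nabla F(p_{0})=0$ alone, so your side claim that $\nabla G(p_{0})=0$, which really needs $F(p_{0})\neq 0$, is not needed). For 4) the paper is much shorter: $\nabla F=\nabla G$ makes the cross term $+2|\nabla F|^{2}\geq 0$, so one just integrates (\ref{f2.3}) as in Theorem \ref{t1.1}; your pointwise argument is correct but roundabout, and your claim that a maximum of $F$ maximizes $FG$ deserves the one-line justification that $t\mapsto t(t-c)$ is nondecreasing for $2t\geq c$, which holds on the range of $F$ because $2F-c=F+G\geq 0$. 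The real divergence is 3): the paper drops down to Proposition \ref{p2.3} and splits on the curvature component $R_{1234}$ (if it is nowhere zero, then $F$ or $G$ is subharmonic, hence constant, hence zero; if $R_{1234}(p)=0$, then $K\pm R_{1234}>0$ near $p$, forcing $F$ and $G$ to vanish near $p$ and hence, by unique continuation, everywhere), whereas you never leave (\ref{f2.3}): at a maximum $p_{1}$ of $FG$ the identity $\nabla(FG)=(c-2F)\nabla F$ gives either $\nabla F(p_{1})=0$, whence $FG(p_{1})=0$ and so $FG\equiv 0$, or $F(p_{1})=G(p_{1})=c/2$, where the exact cancellation of $\Delta(FG)(p_{1})=-2|\nabla F(p_{1})|^{2}$ against the cross term leaves $0=2K(p_{1})c^{2}+\frac{c}{2}\bigl(|\nabla\phi_{+}|^{2}+|\nabla\phi_{-}|^{2}\bigr)(p_{1})$, impossible for $c>0$ (and $c=0$ is trivial, a case you should state). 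Your version buys robustness: it avoids the frame-adapted, $\phi$-dependent quantity $R_{1234}$, whose continuity and coherent global sign the paper's dichotomy tacitly assumes, and for $c>0$ it even dispenses with unique continuation at the end, since $\{F=0\}$ and $\{G=0\}$ are then disjoint closed sets covering the connected manifold $M$. The paper's version is shorter and, in its degenerate sub-case, yields the stronger conclusion $\phi\equiv 0$. Finally, you are right, and more explicit than the paper, that the passage from $FG\equiv 0$ to $F\equiv 0$ or $G\equiv 0$ (needed in 1) and 2)) rests on unique continuation for the harmonic forms $\phi_{\pm}$.
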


\begin{proof}
\textbf{1)} follows directly from (\ref{f2.3}).

\textbf{2)} If $FG$ is constant, then by (\ref{f2.3}) we have $$8KFG+G|\nabla\phi_{+}|^{2}+F|\nabla\phi_{-}|^{2}+2\langle\nabla F, \nabla G\rangle\equiv0.$$
Let $p$ be a critical point of $F$. Since $K>0$, we must have $FG(p)=0$. Hence $FG\equiv0$, $F\equiv0$ or $G\equiv0$

\textbf{3)} If $F+G$ is constant, by Proposition \ref{p2.3}, we have $$|\nabla\phi_{+}|^{2}+4(K-R_{1234})F
=-(|\nabla\phi_{-}|^{2}+4(K+R_{1234})G).$$ If $R_{1234}$ is nowhere zero, then $\Delta F\geq0$ (when $R_{1234}<0$) or $\Delta G\geq0$ (when $R_{1234}>0$). So either $F\equiv0$ or $G\equiv0$.

Otherwise we assume that $R_{1234}(p)=0$. So in a neighborhood $\mathcal {U}(p)$ we have $K-R_{1234}>0$ and $K+R_{1234}>0.$ We must have $F\equiv0$ and $G\equiv0$ in $\mathcal {U}(p)$. Hence $F\equiv0$ and $G\equiv0$.

\textbf{4)} If $F-G$ is constant, then $\nabla F=\nabla G$. It  also follows directly.
\end{proof}

\begin{corollary}
If one of the two conditions holds: 1) $\Delta F\leq 8 \underline{k}F$; 2) $\Delta G\leq 8 \underline{k}G$, then either $F\equiv0$ or $G\equiv0$. Where $\underline{k}$ is the minimal sectional curvature of $M$.
\end{corollary}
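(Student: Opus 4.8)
The plan is to run an integral argument parallel to the proof of Theorem 1.1, handling condition 1 (condition 2 being symmetric) by pairing it against $G$ and exploiting the self-adjointness of $\Delta$, rather than by a pointwise curvature comparison. The tempting shortcut -- deducing from Proposition 2.3 and $\Delta F\le 8\underline{k}F$ that $|\nabla\phi_+|^2\le 4(2\underline{k}-K+R_{1234})F$ and hoping the right-hand side is $\le 0$ -- would require the pointwise bound $K-R_{1234}\ge 2\underline{k}$, and this is \emph{false} in general: on $\mathbb{CP}^2$ (Fubini--Study) one computes $K-R_{1234}=0$ while $2\underline{k}>0$. Accordingly I would use only the \emph{trivial} bound $K\ge 2\underline{k}$, valid because $K=\tfrac12(R_{1313}+R_{1414}+R_{2323}+R_{2424})$ is half a sum of four sectional curvatures, each $\ge\underline{k}$.

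First I would multiply the hypothesis $\Delta F\le 8\underline{k}F$ by $G\ge 0$, integrate over $M$, and substitute $\Delta F=|\nabla\phi_+|^2+4(K-R_{1234})F$ from Proposition 2.3; discarding the term $\int_M(2\underline{k}-K)FG\le 0$ (here $K\ge 2\underline{k}$ and $FG\ge 0$) gives
\begin{equation*}
\int_M G|\nabla\phi_+|^2\ \le\ 4\int_M R_{1234}\,FG. \tag{$\star$}
\end{equation*}
Next I would use $\int_M G\,\Delta F=\int_M F\,\Delta G$ and substitute both Proposition 2.3 identities; the curvature terms $4(K\mp R_{1234})$ combine to give
\begin{equation*}
\int_M G|\nabla\phi_+|^2-\int_M F|\nabla\phi_-|^2\ =\ 8\int_M R_{1234}\,FG. \tag{$\star\star$}
\end{equation*}

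Writing $P=\int_M G|\nabla\phi_+|^2\ge 0$, $Q=\int_M F|\nabla\phi_-|^2\ge 0$ and $R=\int_M R_{1234}FG$, relations $(\star)$ and $(\star\star)$ read $P\le 4R$ and $P-Q=8R$; eliminating $R$ yields $P\le\tfrac12(P-Q)$, i.e. $P+Q\le 0$, hence $P=Q=0$ and $R=0$. Feeding this into Theorem 2.1 finishes the job: integrating $\Delta(FG)$ to zero and using $P=Q=0$ leaves $0=8\int_M KFG+2\int_M\langle\nabla F,\nabla G\rangle$, while $\int_M\langle\nabla F,\nabla G\rangle=\int_M G\,\Delta F=4\int_M KFG$ by Proposition 2.3 (since $P=R=0$); therefore $\int_M KFG=0$. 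As $K>0$ and $FG\ge 0$ this forces $FG\equiv 0$, and since $\phi_+,\phi_-$ are harmonic, unique continuation upgrades $FG\equiv 0$ to $\phi_+\equiv 0$ or $\phi_-\equiv 0$, i.e. $F\equiv 0$ or $G\equiv 0$.

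The main obstacle is conceptual rather than computational: on its own, condition 1 integrates to the vacuous inequality $0\le 8\underline{k}\int_M F$, so everything depends on correctly coupling it to $G$ and to the twin identity $(\star\star)$ so that the \emph{indefinite} quantity $R=\int_M R_{1234}FG$ is pinned to $0$ without ever estimating $R_{1234}$ pointwise. I expect the delicate point to be exactly this bookkeeping, since a direct pointwise approach founders on the $R_{1234}$ term -- which is also the reason no upper sectional-curvature (pinching) hypothesis is required. Finally, condition 2 follows verbatim after interchanging $F\leftrightarrow G$, $\phi_+\leftrightarrow\phi_-$ and $R_{1234}\leftrightarrow -R_{1234}$, the bound $K\ge 2\underline{k}$ being symmetric in this exchange.
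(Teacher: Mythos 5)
Your $(\star)$, $(\star\star)$ and the elimination giving $P=Q=0$, $R=0$ are all correct, and up to that point you have in substance reproduced the paper's own proof: the paper multiplies condition 1) by $G$, invokes $\int_M G\,\Delta F\,dv=\int_M F\,\Delta G\,dv$ together with $K\ge 2\underline{k}$, and deduces that $\int_M G(\Delta F-8\underline{k}F)\,dv$ is simultaneously $\ge 0$ and $\le 0$; your vanishing statements are exactly the saturation of that inequality. The genuine gap is in your endgame. Under the conventions in which Proposition 2.3 and Theorem 2.1 are stated (the proof of Proposition 2.3 uses $\tfrac12\Delta(f^2)=f\Delta f+|\nabla f|^2$, hence $\Delta(FG)=F\Delta G+G\Delta F+2\langle\nabla F,\nabla G\rangle$), integration by parts gives
$$\int_M\langle\nabla F,\nabla G\rangle\,dv=-\int_M G\,\Delta F\,dv,$$
with a minus sign, not the plus sign you wrote. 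With the correct sign your final display collapses to $0=8\int_M KFG-8\int_M KFG$, a tautology --- and this is unavoidable: integrating (2.3) over $M$ is precisely the Stokes/self-adjointness identity you already spent in $(\star\star)$, so it cannot produce any new information. Consequently $\int_M KFG\,dv=0$, and with it $FG\equiv 0$, is not established by your argument as written.

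The conclusion is still reachable from what you proved, but only by an additional pointwise step in which the strict positivity $\underline{k}>0$ enters (your endgame never legitimately uses it, which is itself a warning sign). From $P=0$ and the saturation of $(\star)$ one gets pointwise $G\,|\nabla\phi_+|^2\equiv 0$ and $G(\Delta F-8\underline{k}F)\equiv 0$. If $G\not\equiv 0$, the unique continuation you already invoke shows the zero set of the harmonic form $\phi_-$ has empty interior, so $\{G>0\}$ is dense; by continuity $\Delta F=8\underline{k}F$ holds on all of $M$, and integrating over the closed manifold gives $8\underline{k}\int_M F\,dv=0$, forcing $F\equiv 0$ since $\underline{k}>0$ and $F\ge 0$. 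This is exactly how the paper concludes (``the conditions force $\Delta F=8\underline{k}F$, which leads to $F\equiv0$''). Alternatively, from $P=Q=0$ and the same density argument, $\phi_+$ and $\phi_-$ would both be parallel, $F$ and $G$ positive constants, and Proposition 2.3 would force $K-R_{1234}\equiv 0$ and $K+R_{1234}\equiv 0$, i.e.\ $K\equiv 0$, contradicting $K\ge 2\underline{k}>0$. Your observation that a naive pointwise comparison fails on $\mathbb{CP}^2$ is correct and worthwhile, but the cure is this saturation-plus-unique-continuation step, not the integrated Bochner identity.
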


\begin{proof}
Because $$G\Delta F+F\Delta G=G|\nabla\phi_{+}|^{2}+F|\nabla\phi_{-}|^{2}+8KFG\geq 16\underline{k}FG$$ and $$\int_{M}G\Delta Fdv=\int_{M}F\Delta Gdv.$$
One gets $$\int_{M}G(\Delta F-8 \underline{k}F)dv=\int_{M}F(\Delta G-8 \underline{k}G)dv\geq0.$$ The conditions in the corollary force that $\Delta F=8\underline{k}F$ or $\Delta G=8 \underline{k}G$. This leads to $F\equiv0$ or $G\equiv0$.
\end{proof}

%%%%%%%%%%%%%%%%%%%%%%%%%%%%%%%%%%%%%%%%%%%%%%%%%%%%%%%%%%%%%%%%%%%%%%%%%%%%%%
\section{Kato type inequality}
In this section we discuss the connections between Kato inequality and Hopf conjecture. Recall that the classical Kato inequality says that if $s$ is a section of  a Riemannian vector bundle $E$ over $M$ with connection $\nabla$, then
 $$|\nabla s|^{2}\geq|d|s||^{2}$$ for $s(x)\neq0$. This is an easy consequence of the Schwarz inequality $$|s|\cdot|d|s||=\frac{1}{2}|d|s|^{2}|=|<\nabla s,s>|\leq |s|\cdot|\nabla s|.$$

For harmonic 2-forms, we have the following Seaman's (\cite{[S1]}) refined result.

 \begin{lemma}
 Let $M$ be a compact Riemannian 4-manifold. Let $\phi$ be a harmonic 2-form. Then
\begin{equation}\label{f4.1}
|\nabla\phi|^{2}\geq\frac{3}{2}|d|\phi||^{2}.
\end{equation}
 \end{lemma}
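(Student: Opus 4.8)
The plan is to turn the inequality into a pointwise algebraic statement about the covariant derivative $\nabla\phi$ at a point, and then to exploit harmonicity (i.e. $d\phi=0$ and $\delta\phi=0$ together), which is strictly stronger than what the classical Kato inequality uses and is exactly what upgrades the constant from $1$ to $\frac32$. First I would restrict to a point $p$ with $\phi(p)\neq0$ (on the zero set $|\phi|$ fails to be smooth and the inequality is read in the usual a.e.\ sense). Since $|\phi|=\langle\phi,\phi\rangle^{1/2}$, differentiating gives $(d|\phi|)(e_k)=\langle\nabla_{e_k}\phi,\phi\rangle/|\phi|$, hence $|d|\phi||^2=|\phi|^{-2}\sum_k\langle\nabla_{e_k}\phi,\phi\rangle^2$. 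Thus only the part of $\nabla\phi$ pointing ``radially'' along $\phi$ feeds the right-hand side, and the whole problem is to show that this radial part carries at most $\frac23$ of the total energy $|\nabla\phi|^2$.

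Next, as in the normal form (2.4) above, I would choose the orthonormal frame so that $\phi(p)=a\,\omega^1\wedge\omega^2+b\,\omega^3\wedge\omega^4$ and take it normal at $p$, so that $|\phi|^2=a^2+b^2$ and $\langle\nabla_{e_k}\phi,\phi\rangle=a\,\nabla_k\phi_{12}+b\,\nabla_k\phi_{34}$. Only the eight components $\nabla_k\phi_{12},\nabla_k\phi_{34}$ then enter $|d|\phi||^2$. Here harmonicity enters through the relations $\nabla_i\phi_{jk}+\nabla_j\phi_{ki}+\nabla_k\phi_{ij}=0$ (from $d\phi=0$) and $\sum_i\nabla_i\phi_{ij}=0$ (from $\delta\phi=0$). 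Writing out these eight scalar equations in the canonical frame, the structural fact I would establish is that each of the eight radial components $\nabla_k\phi_{12},\nabla_k\phi_{34}$ equals a signed sum of exactly two of the sixteen remaining components $\nabla_k\phi_{ij}$ with $ij\in\{13,14,23,24\}$, and that these sixteen components occur once each, splitting into four disjoint quadruples indexed by $k$.

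The estimate is then assembled one index $k$ at a time. Fixing $k$ and writing $s=\nabla_k\phi_{12}=\pm u_1\pm u_2$, $t=\nabla_k\phi_{34}=\pm u_3\pm u_4$ in terms of the four ``other'' components $u_1,\dots,u_4$ attached to that $k$, the elementary bounds $u_1^2+u_2^2\geq\frac12 s^2$ and $u_3^2+u_4^2\geq\frac12 t^2$ give $s^2+t^2+\sum_m u_m^2\geq\frac32(s^2+t^2)$, while Cauchy--Schwarz gives $(as+bt)^2\leq(a^2+b^2)(s^2+t^2)$. Hence $s^2+t^2+\sum_m u_m^2\geq\frac{3}{2(a^2+b^2)}(as+bt)^2$. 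Summing over $k=1,\dots,4$, the left-hand sides reconstitute exactly $|\nabla\phi|^2$ and the right-hand sides reconstitute $\frac32|d|\phi||^2$, which is the claim.

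The step I expect to be the main obstacle is the combinatorial bookkeeping in the middle: one must check that, after diagonalizing $\phi$, the closedness and coclosedness relations really do express each radial component through precisely two of the auxiliary components with no repetitions, so that the sixteen auxiliary components partition cleanly into the four quadruples that the final per-index inequality needs. This is the only place where $d\phi=0$ and $\delta\phi=0$ are used jointly, and it is what produces the gain over the Schwarz-inequality constant $1$; once the partition is verified the remaining inequalities are routine.
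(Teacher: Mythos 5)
Your proof is correct, but it takes a genuinely different route from the paper. The paper (following Seaman's original argument) uses a \emph{conformal change of metric}: setting $\exp(2f)=|\phi|^{k}$ and $g'=\exp(2f)g$, exploiting that a harmonic $2$-form on a $4$-manifold remains harmonic under conformal change, and comparing the Bochner formulas for $g$ and $g'$ to arrive at the identity (\ref{f4.9}), namely $|\nabla\phi|^{2}+(\tfrac{3k^{2}}{2}-3k)\,|d|\phi||^{2}=|\phi|^{3k}|\nabla'\phi|'^{2}$; at $k=1$ the coefficient is $-\tfrac32$ and the right-hand side is nonnegative, which is the lemma. You instead prove the pointwise algebraic statement directly from the first-order constraints, which is in essence the dimension-four, $2$-form instance of the refined Kato inequalities of Calderbank--Gauduchon--Herzlich cited in the paper. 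Your flagged ``main obstacle'' does check out: with $\phi(p)=a\,\omega^{1}\wedge\omega^{2}+b\,\omega^{3}\wedge\omega^{4}$ and a frame normal at $p$, the relations from $\delta\phi=0$ read $\nabla_{1}\phi_{12}=\nabla_{3}\phi_{23}+\nabla_{4}\phi_{24}$, $\nabla_{2}\phi_{12}=-\nabla_{3}\phi_{13}-\nabla_{4}\phi_{14}$, $\nabla_{3}\phi_{34}=-\nabla_{1}\phi_{14}-\nabla_{2}\phi_{24}$, $\nabla_{4}\phi_{34}=\nabla_{1}\phi_{13}+\nabla_{2}\phi_{23}$, and those from $d\phi=0$ read $\nabla_{3}\phi_{12}=\nabla_{2}\phi_{13}-\nabla_{1}\phi_{23}$, $\nabla_{4}\phi_{12}=\nabla_{2}\phi_{14}-\nabla_{1}\phi_{24}$, $\nabla_{1}\phi_{34}=\nabla_{3}\phi_{14}-\nabla_{4}\phi_{13}$, $\nabla_{2}\phi_{34}=\nabla_{3}\phi_{24}-\nabla_{4}\phi_{23}$; the sixteen off-diagonal derivatives occur exactly once each and split into the four disjoint quadruples paired with $(\nabla_{k}\phi_{12},\nabla_{k}\phi_{34})$, $k=1,\dots,4$, exactly as your per-index estimate requires (one caveat of labeling only: the quadruple paired with $k$ consists of derivatives in directions \emph{other} than $k$, e.g.\ for $k=1$ it is $\nabla_{3}\phi_{23},\nabla_{4}\phi_{24},\nabla_{3}\phi_{14},\nabla_{4}\phi_{13}$, so ``attached to $k$'' should not be read as a statement about the derivative index). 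As for what each approach buys: yours is elementary and purely local --- no compactness, no Bochner identity, no conformal geometry --- and it exhibits exactly which configurations of $\nabla\phi$ achieve equality. The paper's conformal method yields more than the single inequality: it produces the whole one-parameter family of identities (\ref{f4.9}) in $k$, which the paper then uses in Lemma 4.2 (letting $k\to\infty$) to show the constant $\tfrac32$ is optimal, a sharpness statement your pointwise argument does not directly give.
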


The proof below is a slight generalization of \cite{[S1]}, but the idea is the same.

\begin{proof}
(Sketch). Let $\exp(2f)=|\phi|^{k}$ at $\phi\neq0$. Then $f=\frac{k}{2}\log|\phi|$. We can deduce that
\begin{equation}
2(-\Delta f-|df|^{2})|\phi|^{2}=-k|\phi|\Delta|\phi|+(k-\frac{k^{2}}{2})|d|\phi||^{2}.
\end{equation}

Consider the new metric $g^{'}=\exp(2f)g$. Let $F(\phi)$ be the curvature term in the Bochner formula
\begin{equation}
\frac{1}{2}\Delta|\phi|^{2}-\langle\phi, \Delta\phi\rangle=|\nabla\phi|^{2}+F(\phi),
 \end{equation}
 $\widetilde{F(\phi)}$ be the corresponding curvature term under metric change. Then we can calculate that
\begin{equation}
\widetilde{F(\phi)}|\phi|^{3k}=F(\phi)+2(-\Delta f-|df|^{2})|\phi|^{2},
\end{equation}
Because a harmonic 2-form is still harmonic under a conformal transformation (c.f.\cite{[Be]}).  Combing (4.2), (4.3) and (4.4), one has
\begin{equation}
|\nabla \phi|^{2}-(2k-\frac{k^{2}}{2})|d|\phi||^{2}=-\frac{k-1}{2} \Delta|\phi|^{2}+|\phi|^{3k}(|\nabla^{'}\phi|'^{2}-\frac{1}{2}\Delta^{'}|\phi|^{2-2k}),
\end{equation}
where $\nabla', |\cdot|', \Delta'$ denote the corresponding connection, metric, Laplacian under the conformal change of the metric.

On the other hand, by the formula of conformal change (c.f.\cite{[Be]} page 59), one has
\begin{equation}
\Delta^{'}|\phi|^{2-2k}=|\phi|^{-k}\Delta|\phi|^{2-2k}+2(k-k^{2})|\phi|^{-3k}|d|\phi||^{2}.
\end{equation}
Substituting(4.6) into (4.5), we obtain
\begin{equation}
|\nabla \phi|^{2}-(\frac{k^{2}}{2}+k)|d|\phi||^{2}=\frac{1-k}{2} \Delta|\phi|^{2}-\frac{1}{2}|\phi|^{2k}\Delta|\phi|^{2-2k}+ |\phi|^{3k}|\nabla^{'}\phi|'^{2}.
\end{equation}
Substituting
\begin{equation}
\Delta|\phi|^{2k}=2k|\phi|^{2k-1}\Delta|\phi|+2k(2k-1)|\phi|^{2k-2}|d|\phi||^{2}
\end{equation}
into (4.7), we have
\begin{equation}\label{f4.9}
|\nabla \phi|^{2}+(\frac{3k^{2}}{2}-3k)|d|\phi||^{2}=|\phi|^{3k}|\nabla^{'}\phi|'^{2}
\end{equation}
Choose $k=1$, we obtain the lemma.
\end{proof}

Seaman used (\ref{f4.1}) to prove that a compact Riemannian 4-manifold with $0.1714\leq sec_{M}\leq1$ is definite. So $S^{2}\times S^{2}$  does not admit metric with $0.1714\leq sec_{M}\leq1$.

In \cite{[CGH]}, Calderbank, Gauduchon and Herzlich obtained the refined Kato inequalities for harmonic forms in all dimensions.

\begin{lemma}
In the unconditional case, Seaman's result is the best possible.
\end{lemma}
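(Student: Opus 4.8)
The plan is to prove sharpness by exhibiting a single compact example in which Seaman's inequality (\ref{f4.1}) becomes an equality on an open set where $d|\phi|\neq 0$; no larger constant can then hold unconditionally. The key observation is already contained in the proof of Lemma (\ref{f4.1}): specializing the identity (\ref{f4.9}) to $k=1$ gives $|\nabla\phi|^{2}-\tfrac{3}{2}|d|\phi||^{2}=|\phi|^{3}\,|\nabla'\phi|'^{2}$, where $\nabla'$ is the Levi-Civita connection of the conformal metric $g'=|\phi|_{g}\,g$. Hence equality in (\ref{f4.1}) holds exactly at the points where $\nabla'\phi=0$, that is, where $\phi$ is parallel for $g'$. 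The whole problem thus reduces to manufacturing a harmonic $2$-form that is parallel in the rescaled metric $g'$ yet has non-constant norm in $g$.

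I would produce this by running the conformal change backwards. Let $(N^{4},h)$ be a compact $4$-manifold carrying a parallel $2$-form $\psi$ with $|\psi|_{h}\equiv 1$; the flat torus $T^{4}$ with $\psi=dx^{1}\wedge dx^{2}$, or any compact K\"ahler surface with $\psi$ its K\"ahler form, will do, and notably no curvature hypothesis is needed. Being parallel, $\psi$ is closed and coclosed, hence $h$-harmonic. Fix any positive non-constant smooth function $u$ and set $g=u\,h$, $\phi=\psi$. For a middle-degree form in dimension $4$ the norm scales as $|\phi|_{g}=u^{-1}|\psi|_{h}=u^{-1}$, so the metric occurring in (\ref{f4.9}) is $g'=|\phi|_{g}\,g=u^{-1}\cdot u\,h=h$. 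By construction $\phi$ is then parallel for $g'$.

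Two verifications finish the argument, and they are where the care lies. First, $\phi$ must be $g$-harmonic: this is precisely the conformal invariance of harmonic $2$-forms in dimension $4$ already used in the proof of Lemma (\ref{f4.1}), applied to the conformal pair $h$ and $g=uh$. Second, since $\nabla'\phi=\nabla^{h}\psi=0$, the right-hand side of (\ref{f4.9}) at $k=1$ vanishes identically, giving $|\nabla^{g}\phi|^{2}=\tfrac{3}{2}|d|\phi|_{g}|^{2}$ at every point; choosing $u$ with $du\neq 0$ on an open set makes $d|\phi|_{g}=d(u^{-1})\neq 0$ there, so the equality is genuine and the constant $\tfrac{3}{2}$ is attained. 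The main obstacle is conceptual rather than computational: one must recognize the equality locus of (\ref{f4.9}) as the conformally parallel condition and then check that rescaling $g=uh$ by the factor $|\phi|_{g}$ returns exactly the parallel metric $h$, which is what forces the remainder $|\phi|^{3}|\nabla'\phi|'^{2}$ to vanish and saturates Seaman's bound. With positive curvature no longer assumed, this compact example shows that Seaman's result is best possible.
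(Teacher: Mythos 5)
Your proof is correct, and it takes a genuinely different route from the paper's. The paper argues by iteration and contradiction: if $|\nabla\phi|^{2}\geq\alpha|d|\phi||^{2}$ held unconditionally for some $\alpha>\frac{3}{2}$, then applying this hypothesis to the conformally rescaled form in (\ref{f4.9}) gives $|\nabla\phi|^{2}\geq\bigl(\alpha(1-k)^{2}-\frac{3k^{2}}{2}+3k\bigr)|d|\phi||^{2}$, and since the coefficient tends to $+\infty$ as $k\to\infty$ when $\alpha>\frac{3}{2}$, every harmonic $2$-form on every compact Riemannian $4$-manifold would have to satisfy $d|\phi|=0$, which is absurd. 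You instead use (\ref{f4.9}) only at $k=1$, identify the equality locus of (\ref{f4.1}) as the condition $\nabla'\phi=0$ for $g'=|\phi|g$, and run the conformal change backwards from a parallel $2$-form on the flat torus, producing an explicit compact example in which Seaman's constant is attained pointwise while $d|\phi|\neq0$. Your construction buys something the paper's argument does not: it shows that the constant $\frac{3}{2}$ is actually achieved, not merely that it cannot be improved, and it exhibits the harmonic $2$-form of non-constant norm whose existence the paper's contradiction implicitly relies on but never displays; the paper's argument, in turn, requires no construction at all, only the one-parameter family of conformal changes. One minor normalization point in your write-up: the K\"ahler form of a K\"ahler surface has $|\psi|_{h}\equiv\sqrt{2}$ rather than $1$, but this is harmless, since any constant norm makes $g'$ a constant multiple of $h$, for which $\nabla'\phi=0$ still holds (or one simply rescales $\psi$). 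Both arguments, of course, rest on the validity of the paper's identity (\ref{f4.9}) and on the conformal invariance of harmonic $2$-forms in dimension four, which you invoke correctly.
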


\begin{proof}
If $|\nabla\phi|^{2}\geq\alpha|d|\phi||^{2}$ ($\alpha>\frac{3}{2}$) for any harmonic 2-form, by formula (\ref{f4.9}),
$$|\nabla \phi|^{2}+(\frac{3k^{2}}{2}-3k)|d|\phi||^{2}=|\phi|^{3k}|\nabla^{'}\phi|'^{2}\geq|\phi|^{3k}\alpha|d|\phi|^{'}|^{2}=\alpha(1-k)^{2}|d|\phi||^{2}.$$
Let $k\rightarrow \infty$. This leads to $d|\phi|=0$.
\end{proof}

In \cite{[Bet1]} Bettiol considered so called biorthogonal sectional curvature $sec_{M}^{\perp}$ for Riemannian 4-manifold. For each plane $\sigma\subset T_{p}M$,  	
$$sec_{M}^{\perp}(\sigma)\coloneqq\frac{1}{2}(sec_{M}(\sigma)+sec_{M}(\sigma^{\perp})),$$
where $\sigma^{\perp}$ is the plane orthogonal to $\sigma$. He proved that $S^{2}\times S^{2}$ admits a metric with $sec_{M}^{\perp}>0$. Moreover, he showed in \cite{[Bet2]} that  for a compact simple-connected 4-manifold $M$, the following three statements  are equivalent: 1) $M$ admits $sec_{M}^{\perp}>0$; 2) $M$ admits $Ric_{M}>0$; 1) $M$ admits $scal_{M}>0$.

So Theorem 1.1 essentially holds for positive biorthogonal sectional curvature. By Bettiol's result and Lemma 4.2, we know that only Theorem 2.1 is not enough for Hopf conjecture. We need more information about how sectional curvature affects harmonic 2-forms.

We still can ask the following question:

{\em   Let $\phi$ be a harmonic 2-form on a compact Riemannian 4-manifold with \emph{positive sectional curvature}. Does $|\nabla\phi|^{2}\geq2|d|\phi||^{2}$ hold?}

 If the answer is positive, then Hopf conjecture is true.

\section{Appendix}
Though the computation of formula (\ref{f2.2}) is standard and may be found elsewhere, for convenience we write down the details.

Let $\phi=\sum_{k,l}\phi_{kl}\omega^{k}\wedge\omega^{l}$ be a harmonic 2-form. $\phi_{kl}=-\phi_{lk}$

\begin{lemma}
$\Delta\phi_{kl}(p)=\sum_{p}(Ric_{kp}\varphi_{pl}+Ric_{lp}\varphi_{kp})-2\sum_{p,q}R_{kplq}\varphi_{pq}$.
\end{lemma}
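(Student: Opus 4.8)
The plan is to read the identity off the two first-order conditions that encode harmonicity, rather than unwinding the abstract Weitzenb\"ock formula (\ref{f2.1}); this keeps the bookkeeping transparent and self-contained. Throughout I fix the point $p$ and a frame normal at $p$, so that $\nabla_{e_i}e_j(p)=0$, write $\nabla_i\phi_{kl}$ for the components of $\nabla_{e_i}\phi$, and read $\Delta\phi_{kl}=\sum_i\nabla_i\nabla_i\phi_{kl}$ as the Laplacian acting on the coefficients. The first step is to translate $\Delta\phi=0$, i.e. $d\phi=0$ and $\delta\phi=0$, into index form: closedness gives the cyclic identity $\nabla_i\phi_{kl}+\nabla_k\phi_{li}+\nabla_l\phi_{ik}=0$, and coclosedness gives the divergence-free identity $\sum_i\nabla_i\phi_{ik}=0$. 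The latter holds identically on $M$, hence may be differentiated once more.

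Next I would compute $\Delta\phi_{kl}=\sum_i\nabla_i\nabla_i\phi_{kl}$. Using the antisymmetry of $\phi$, the cyclic identity rewrites $\nabla_i\phi_{kl}=\nabla_k\phi_{il}+\nabla_l\phi_{ki}$, so that
\[
\Delta\phi_{kl}=\sum_i\nabla_i\nabla_k\phi_{il}+\sum_i\nabla_i\nabla_l\phi_{ki}.
\]
In each sum I commute the two covariant derivatives so as to bring $\nabla_k$ (resp. $\nabla_l$) to the front. The reordered terms are $\nabla_k\big(\sum_i\nabla_i\phi_{il}\big)$ and $\nabla_l\big(\sum_i\nabla_i\phi_{ki}\big)$, both of which vanish by the differentiated divergence-free identity. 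What remains is exactly a sum of curvature commutators, $\sum_i[\nabla_i,\nabla_k]\phi_{il}+\sum_i[\nabla_i,\nabla_l]\phi_{ki}$.

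The last step is to expand these commutators by the Ricci identity, which expresses $[\nabla_i,\nabla_j]$ acting on a $(0,2)$-tensor through the curvature operator $R_{e_ie_j}$, i.e. through the components $R_{ijkl}$. Each commutator splits into two contractions: one in which the summation index $i$ is contracted against a curvature slot, producing a Ricci factor via $\sum_iR_{ikim}=Ric_{km}$, and one genuine four-index term. The Ricci contractions assemble into $\sum_p(Ric_{kp}\phi_{pl}+Ric_{lp}\phi_{kp})$, matching the first group in the statement. The four-index remainders from the two commutators are, after relabelling, of the form $\sum_{p,q}(R_{plkq}-R_{pklq})\phi_{pq}$, and the task is to show that, once the overall sign is normalized, this collapses to the single term $-2\sum_{p,q}R_{kplq}\phi_{pq}$.

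I expect this last collapse to be the only real obstacle. Neither four-index term is by itself symmetric or antisymmetric in the pair $(p,q)$, so the factor $2$ and the precise index placement $R_{kplq}$ emerge only after contracting against the antisymmetric $\phi_{pq}$ and invoking the curvature symmetries $R_{abcd}=R_{cdab}=-R_{bacd}$ (and, organized differently, the first Bianchi identity, exactly as in the proof of Lemma \ref{l2.1}). Everything else is routine: choosing the normal frame, fixing the sign conventions for $\Delta$ and for $R_{ijkl}$ so that the Ricci terms appear with a plus sign, and checking that $\delta\phi=0$ may legitimately be differentiated. As a final consistency check I would specialize to the frame of (\ref{f2.4}), in which only $\phi_{12}$ and $\phi_{34}$ survive at $p$, and verify that the formula reproduces the two expressions for $\Delta f_1$ and $\Delta f_2$ recorded in Lemma \ref{l2.1}.
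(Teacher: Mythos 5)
Your proposal is correct, but it takes a genuinely different route from the paper's. The paper derives the identity ``from above'': it starts from the second-order Weitzenb\"ock formula (\ref{f2.1}), uses $\Delta\phi=0$ to equate $\sum_i\nabla^2_{e_ie_i}\phi$ with the curvature term $\sum_{i,j}\omega^i\wedge i(e_j)R_{e_ie_j}\phi$, pairs that term against the basis form $\omega^k\wedge\omega^l$, and grinds out the contractions; the curvature symmetries enter only at the very end, when four contracted expressions are merged into the Ricci terms and the single term $-2\sum_{p,q}R_{kplq}\phi_{pq}$. You instead derive it ``from below'': you never invoke (\ref{f2.1}), but split harmonicity into the first-order conditions $d\phi=0$ and $\delta\phi=0$ --- note this equivalence uses Hodge theory and hence compactness of $M$, whereas the Weitzenb\"ock route needs only the pointwise equation $\Delta\phi=0$, so you should flag that hypothesis --- then convert these into the cyclic and divergence-free identities and let curvature enter through the commutators $[\nabla_i,\nabla_k]$. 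The step you single out as the only real obstacle does go through: writing the remainder as $\sum_{p,q}(R_{plkq}-R_{pklq})\phi_{pq}$, the first Bianchi identity gives $R_{plkq}-R_{pklq}=R_{klpq}$ (up to the paper's sign convention $R_{XY}=-\nabla_X\nabla_Y+\nabla_Y\nabla_X+\nabla_{[X,Y]}$, which is opposite to the most common one), and then pair symmetry together with the antisymmetry of $\phi$ yields $\sum_{p,q}R_{klpq}\phi_{pq}=2\sum_{p,q}R_{kplq}\phi_{pq}$, which is exactly the asserted factor of $2$. As for what each approach buys: the paper's is shorter if one accepts (\ref{f2.1}) as known and is purely local; yours is self-contained apart from the Ricci commutation identity, makes transparent how each of the two harmonicity conditions is used, and your closing consistency check against Lemma \ref{l2.1} in the frame (\ref{f2.4}) is a sensible safeguard against sign-convention errors, which are the only genuine hazard in either computation.
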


\begin{proof}
It is easy to see that $$\nabla^{2}_{e_{i}e_{i}}\varphi=(\Delta\varphi_{kl})\omega^{k}\wedge\omega^{l}+\varphi_{kl}\nabla_{e_{i}}\nabla_{e_{i}}(\omega^{k}\wedge\omega^{l}).$$
Then
\begin{eqnarray*}
2\Delta\varphi_{kl}&=&\langle\nabla^{2}_{e_{i}e_{i}}\varphi, \omega^{k}\wedge\omega^{l}\rangle\\
                   &=&\langle\omega^{i}\wedge i(e_{j})R_{e_{i}e_{j}}\varphi, \omega^{k}\wedge\omega^{l}\rangle\\
                   &=&\phi_{pq}\langle\omega^{i}\wedge i(e_{j})R_{e_{i}e_{j}}(\omega^{p}\wedge\omega^{q}), \omega^{k}\wedge\omega^{l}\rangle\\
                   &=&\phi_{pq}\langle\omega^{i}\wedge i(e_{j})[(R_{e_{i}e_{j}}\omega^{p})\wedge\omega^{q})+\omega^{p}\wedge (R_{e_{i}e_{j}}\omega^{q})], \omega^{k}\wedge\omega^{l}\rangle.
\end{eqnarray*}
Note that $R_{e_{i}e_{j}}\omega^{p}=\sum_{m}R_{ijpm}\omega^{m}$ and $i(e_{j})R_{e_{i}e_{j}}\omega^{p}=R_{ijpj}$.
Hence
\begin{eqnarray*}
2\Delta\varphi_{kl}&=&\sum_{i,j,p,q}\phi_{pq}\langle\omega^{i}\wedge[R_{ijpj}\omega^{q}-\delta^{q}_{j}\sum_{m}R_{ijpm}\omega^{m}\\
                   &&+\delta^{p}_{j}\sum_{m}R_{ijqm}\omega^{m}-R_{ijqj}\omega^{p}],\omega^{k}\wedge\omega^{l}\rangle\\
                   &=&\sum_{i,j,p,q}\langle\phi_{pq}R_{ijpj}\omega^{i}\wedge\omega^{q}-\phi_{pq}R_{ijqj}\omega^{i}\wedge\omega^{p},\omega^{k}\wedge\omega^{l}\rangle\\
                   &&+\sum_{i,j,p,q,m}\langle\phi_{pq}\delta^{p}_{j}R_{ijqm}\omega^{i}\wedge\omega^{m}-\phi_{pq}\delta^{q}_{j}R_{ijpm}\omega^{i}\wedge\omega^{m},\omega^{k}\wedge\omega^{l}\rangle\\
                   &=&\sum_{j,p}(\phi_{pl}R_{kjpj}-\phi_{pk}R_{ljpj})-\sum_{j,q}(\phi_{lq}R_{kjqj}-\phi_{kq}R_{ljqj})\\
                   &&+\sum_{j,p,q}(\phi_{pq}\delta^{p}_{j}R_{kjql}-\phi_{pq}\delta^{p}_{j}R_{ljqk})-
                   \sum_{j,p,q}(\phi_{pq}\delta^{q}_{j}R_{kjpl}-\phi_{pq}\delta^{q}_{j}R_{ljpk})\\
                   &=&\sum_{p}(\phi_{pl}Ric_{kp}-\phi_{pk}Ric_{lp})-\sum_{q}(\phi_{lq}Ric_{kq}-\phi_{kq}Ric_{lq})\\
                   &&+\sum_{p,q}(\phi_{pq}R_{kpql}-\phi_{pq}R_{lpqk})-\sum_{p,q}(\phi_{pq}R_{kqpl}-\phi_{pq}R_{lqpk})\\
                   &=&2\sum_{p}(Ric_{kp}\varphi_{pl}+Ric_{lp}\varphi_{kp})-4\sum_{p,q}R_{kplq}\varphi_{pq}.
\end{eqnarray*}

\end{proof}

\bibliographystyle{amsplain}

\end{document}